\newcommand{\apref}[3]{\hyperref[#2]{#1\ref*{#2}#3}}
\theoremstyle{plain}
\newtheorem{prop}{Proposition}[section]
\newtheorem{lemma}[prop]{Lemma}
\newtheorem{thm}[prop]{Theorem}
\theoremstyle{definition}
\theoremstyle{remark}
\DeclareMathOperator{\TO}{\mc L}
\DeclareMathOperator{\SL}{SL}
\DeclareMathOperator{\PSL}{PSL}
\DeclareMathOperator{\Ima}{Im}
\DeclareMathOperator{\Rea}{Re}
\newcommand\N{\mathbb{N}}
\newcommand\R{\mathbb{R}}
\newcommand\Z{\mathbb{Z}}
\newcommand\C{\mathbb{C}}
\newcommand{\h}{\mathbb{H}}
\newcommand{\mc}[1]{\mathcal #1}
\newcommand{\wt}{\widetilde}
\newcommand{\wh}{\widehat}
\DeclareMathOperator{\dvol}{dvol}
\DeclareMathOperator{\id}{id}
\newcommand{\sceq}{\mathrel{\mathop:}=}
\newcommand{\mat}[4]{\begin{pmatrix} #1&#2\\#3&#4\end{pmatrix}}
\newcommand{\bmat}[4]{\begin{bmatrix} #1&#2\\#3&#4\end{bmatrix}}
\newcommand{\textbmat}[4]{\left[\begin{smallmatrix} #1&#2 \\ #3&#4
\end{smallmatrix}\right]}
\setlist[enumerate,1]{label=(\roman*),font=\normalfont}
\newcommand\be[4][{}]{\ifthenelse{\equal{#1}{}}{\begin{#2}\label{#3}#4\end{#2}}{\begin{#2}[#1]\label{#3}#4\end{#2}}}
\newcommand\be*[3][{}]{\ifthenelse{\equal{#1}{}}{\begin{#2}#3\end{#2}}{\begin{#2}[#1]#3\end{#2}}}
\theoremstyle{definition}
\newlist{enumroman}{enumerate}{1}
\setlist[enumroman,1]{label=(\roman*)}
\newcommand{\se}[2][]{\ifthenelse{\equal{#1}{}}{Section \ref{#2}}{Section \ref{#2:#1}}}
\newcommand{\rem}[2][]{\ifthenelse{\equal{#1}{}}{Remark (\ref{r:#2})}{\tm{#2} \ref{r:#2:#1}}}
\newcommand{\tm}[2][]{\ifthenelse{\equal{#1}{}}{Theorem (\ref{t:#2})}{\tm{#2} \ref{t:#2:#1}}}
\newcommand{\lem}[2][]{\ifthenelse{\equal{#1}{}}{Lemma \ref{#2}}{\lem{#2} \ref{#2:#1}}}
\newcommand{\co}[2][]{\ifthenelse{\equal{#1}{}}{Corollary (\ref{c:#2})}{\co{#2} \ref{c:#2:#1}}}
\newcommand{\ex}[2][]{\ifthenelse{\equal{#1}{}}{Example \ref{ex:#2}}{\ex{#2} \ref{ex:#2:#1}}}
\newcommand{\de}[2][]{\ifthenelse{\equal{#1}{}}{Definition \ref{d:#2}}{\de{#2} \ref{d:#2:#1}}}
\newcommand{\pp}[2][]{\ifthenelse{\equal{#1}{}}{Proposition \ref{#2}}{\pp{#2} \ref{#2:#1}}}
\newcommand{\ACT}[2][s]{\tau_{#1}\left(#2\right)}
\begin{document}

\title{Zero is a resonance of every Schottky surface}
\author[A.\@ Adam]{Alexander Adam}
\address{Institut de Math\'ematiques de Jussieu -- Paris Rive Gauche, Sorbonne Universit\'e, Campus Pierre et Marie Curie, 4, place Jussieu, 75252 Paris Cedex 05, France}
\email{alexander.adam@imj-prg.fr}

\author[A.\@ Pohl]{Anke Pohl}
\address{University of Bremen, Department 3 -- Mathematics, Bibliothekstr.\@
5, 28359 Bremen, Germany}
\email{apohl@uni-bremen.de}

\author[A.\@ Wei\ss{}e]{Alexander Wei\ss{}e}
\address{Max Planck Institute for Mathematics, Vivatsgasse 7, 53111 Bonn, Germany}
\email{weisse@mpim-bonn.mpg.de}

\subjclass[2010]{Primary: 58J50, 37C30; Secondary: 11F72, 11M36}
\keywords{resonances, Schottky surfaces, transfer operator}
\begin{abstract} 
For certain spectral parameters we find explicit eigenfunctions of transfer operators for Schottky surfaces. Comparing the dimension  of the eigenspace for the spectral parameter zero with the multiplicity of topological zeros of the Selberg zeta function, we deduce that zero is a resonance of every Schottky surface.
\end{abstract}

\maketitle

\section{Introduction and statement of results}

The location of the resonances of hyperbolic surfaces is of great interest for applications, see, e.\,g., \cite{BGS, Bourgain_Kontorovich}. Accordingly much effort has been and is still being spend on their investigation. For the elementary hyperbolic surfaces (i.\,e., those with cyclic\footnote{To improve readibility we restrict here to hyperbolic surfaces in the strict sense, i.\,e., to manifolds, even though many of the results are valid for (two-dimensional real hyperbolic good) orbifolds as well.} fundamental group, namely, the hyperbolic plane, the hyperbolic cyclinders, and the parabolic cyclinders) the precise values of all resonances including their multiplicities are known. For arbitrary hyperbolic surfaces, asymptotic counting results on the number of their resonances such as Weyl laws (for surfaces of finite area), Weyl-type bounds, fractal Weyl bounds and refinements could be established, see e.\,g., \cite{Selberg_Goe,Venkov_book2, Mueller_scattering, GZ_upper_bounds, GZ_scattering_asympt, GZ_Wave, Guillope_Lin_Zworski, Borthwick_sharpbounds, JN, Pohl_Soares}. A good reference for this field is \cite{Borthwick_book}.  Results on precise numerical values for resonances of non-elementary hyperbolic surfaces, however, are still rather sparse. For non-elementary Schottky surfaces $X$ (surfaces which we consider in this note) the only explicitly known resonance is the critical exponent $\delta$ or, equivalently, the Hausdorff dimension of the limit set of the fundamental group of $X$. We provide the exact value of an additional resonance.

\begin{thm}\label{thm:zero}
Zero is a resonance of every Schottky surface.
\end{thm}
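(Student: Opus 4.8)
The plan is to realise the Selberg zeta function $Z_X$ of the Schottky surface $X$ as the Fredholm determinant $Z_X(s)=\det\bigl(1-\mathcal L_s\bigr)$ of the holomorphic family of nuclear transfer operators $\mathcal L_s$ attached to the Schottky coding (acting on a suitable space of holomorphic functions on the Schottky disks $D_a$, $a\in\{1,\dots,2r\}$, where $r$ is the rank of the free Schottky group), and to recall that the resonances of $X$ are precisely the \emph{non-topological} zeros of $Z_X$. Writing $m_0^{\mathrm{res}}$ for the resonance multiplicity at $s=0$ and $m_0^{\mathrm{top}}$ for the multiplicity of the topological zero there, one has $\ord_{s=0}Z_X=m_0^{\mathrm{res}}+m_0^{\mathrm{top}}$. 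Hence the whole statement reduces to the strict inequality $\ord_{s=0}Z_X>m_0^{\mathrm{top}}$, and the argument is a comparison of two integers.

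For the lower bound on $\ord_{s=0}Z_X$ I would invoke the Gohberg--Sigal theory for holomorphic families of Fredholm operators: since $Z_X\not\equiv 0$, the multiplicity of $s=0$ as a zero of $\det(1-\mathcal L_s)$ equals the total algebraic multiplicity of $1$ as a characteristic value of the pencil $s\mapsto 1-\mathcal L_s$, which dominates the geometric multiplicity. This yields
\[
\ord_{s=0}Z_X\ \ge\ \dim\ker\bigl(1-\mathcal L_0\bigr).
\]

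It remains to bound $\dim\ker(1-\mathcal L_0)$ from below by exhibiting explicit eigenfunctions, and this is exactly where the spectral parameter $s=0$ is used: the automorphy weights $(\gamma')^{s}$ trivialise, so $\mathcal L_0$ becomes the purely combinatorial operator $(\mathcal L_0 f)_a=\sum_{b\neq\bar a}f_b\circ\gamma_b$, which on constant tuples reduces to the transition matrix $A$ with $A_{ab}=1-\delta_{b,\bar a}$. A short computation shows that a constant tuple $(c_a)$ is fixed by $A$ if and only if $c_{\bar a}=-c_a$ for every $a$ (which then forces $\sum_a c_a=0$ automatically); these tuples form an $r$-dimensional space, so $\dim\ker(1-\mathcal L_0)\ge r$.

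Finally I would identify the topological multiplicity. For a non-elementary Schottky surface, which is convex cocompact, the topological zeros of $Z_X$ sit at $s=-k$, $k\in\N_0$, with multiplicity $(2k+1)\,(-\chi(X))$; at $s=0$ this gives $m_0^{\mathrm{top}}=-\chi(X)=r-1$. Combining the three steps, $m_0^{\mathrm{res}}=\ord_{s=0}Z_X-(r-1)\ge r-(r-1)=1$, so zero is a resonance. I expect the main obstacle to be quantitative rather than structural: the gap between the kernel bound $r$ and the topological count $r-1$ is \emph{exactly one}, so both numbers must be pinned down with no slack. Concretely, one must verify that the determinant identity and the Gohberg--Sigal inequality hold on the chosen function space, that the $r$ constant eigenfunctions genuinely lie in it and are linearly independent, and that the topological multiplicity at $0$ is precisely $r-1$ and not inadvertently larger (so that no part of the vanishing claimed as a resonance is already absorbed into the topological count). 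Making these bookkeeping steps mutually consistent is the crux of the proof.
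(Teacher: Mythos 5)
Your proposal is correct and shares the paper's overall architecture: the identity $Z_X(s)=\det(1-\TO_{X,s})$, the Borthwick--Judge--Perry factorization \eqref{SZF_factorization} giving topological multiplicity exactly $-\chi(X)=r-1$ at $s=0$, and the multiplicity comparison $\ord_{s=0}Z_X\ \ge\ \dim\ker(1-\TO_{X,0})\ \ge\ r\ >\ r-1$. Where you genuinely diverge is in how the kernel bound is obtained. The paper builds machinery valid for every $s\in-\N_0$ (and, for cylinders, for the full lattice $-\N_0+\tfrac{2\pi i}{\ell}\Z$): prototype eigenfunctions $z^n$ of $\tau_s(a_L)$ (Proposition~\ref{babycase:eigenfunction}), conjugation (Lemma~\ref{lem:conjugation}), an inverting element (Lemma~\ref{lem:inverse}), and the vanishing of the cross terms (Proposition~\ref{prop:vanishing}), assembled in Theorem~\ref{thm:arb_full} into eigenfunction tuples supported on a pair of components $(j_0,-j_0)$. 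You instead specialize at once to $s=0$, where the weights $(\gamma')^s$ trivialize, and solve a purely combinatorial fixed-point problem for the transition matrix on constant tuples; the solutions $c_{\bar a}=-c_a$ you find are precisely the $s=0$ instances of the paper's eigenfunctions, so the two constructions agree there. Your route is more elementary---no conjugation lemmas and essentially no holomorphy-at-infinity bookkeeping beyond checking that constants lie in $\mc H$---but it yields only Theorem~\ref{thm:zero}, whereas the paper's generality is what produces Theorems~\ref{thm:specialzeros}--\ref{thm:ef_arb}. You also make explicit the Gohberg--Sigal step (the order of vanishing of the Fredholm determinant dominates the geometric multiplicity of the eigenvalue $1$), which the paper invokes only tacitly; that is a genuine gain in rigor. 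One small slip: your ``if and only if'' characterization of the fixed constant tuples fails for $r=1$, since then the transition matrix acts as the identity on constants and \emph{all} constant tuples are fixed (a $2$-dimensional space, consistent with Theorem~\ref{thm:ef_cyl}); this only improves your lower bound, so the argument is unaffected.
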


For elementary Schottky surfaces, i.\,e., the hyperbolic cylinders, the methods we use for Theorem~\ref{thm:zero} allow us to recover the known result on the full resonance set, albeit with a new proof.

\begin{thm}\label{thm:specialzeros}
The resonance set of the hyperbolic cylinder $C_\ell$ with central geodesic of (primitive) length $\ell$ is $-\N_0 + \frac{2\pi i}{\ell}\Z$. Each resonance has multiplicity $2$.
\end{thm}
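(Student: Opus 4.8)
The plan is to realize the hyperbolic cylinder $C_\ell$ as $\langle\gamma\rangle\backslash\h$ for a single hyperbolic generator $\gamma$ of translation length $\ell$, to diagonalize the associated rank-one transfer operator $\mc L_s$ completely and explicitly, and to read off the resonances as the spectral parameters at which $1$ is an eigenvalue, together with their multiplicity as the dimension of the corresponding eigenspace. The whole point of the elementary case is that this diagonalization can be carried out by hand, so that the abstract comparison between eigenspace dimension and zeta-zero multiplicity becomes an explicit computation.

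First I would write down $\mc L_s$ for the elementary Schottky datum attached to $\gamma$. The alphabet consists of the two letters $\gamma$ and $\gamma^{-1}$, with associated disks $D_+$, $D_-$ shrinking onto the attracting and repelling fixed points of $\gamma$. Since the only admissible transitions in the rank-one coding are $\gamma\mapsto\gamma$ and $\gamma^{-1}\mapsto\gamma^{-1}$ (no backtracking forbids the two mixed ones), the operator is block diagonal on $\mathrm{Hol}(D_+)\oplus\mathrm{Hol}(D_-)$, each block being a single weighted composition operator $f\mapsto(\varphi')^s\,(f\circ\varphi)$ with $\varphi=\gamma$ resp.\ $\varphi=\gamma^{-1}$.

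Next I would move the relevant fixed point of each block to $0$ by a M\"obius conjugation, so that $\varphi$ becomes the dilation $z\mapsto e^{-\ell}z$ with $\varphi'\equiv e^{-\ell}$. The monomials $z^n$, $n\in\N_0$, then form a complete system of eigenfunctions in each block, with eigenvalue $e^{-(s+n)\ell}$. Hence the spectrum of $\mc L_s$ is $\{e^{-(s+n)\ell}:n\in\N_0\}$, every value occurring with multiplicity exactly two, one monomial from each of the two blocks (within one block the $e^{-(s+n)\ell}$ are pairwise distinct, as $(n-m)\ell$ is real and nonzero for $n\neq m$). The condition that $1$ be an eigenvalue reads $e^{-(s+n)\ell}=1$, i.e.\ $s\in-n+\frac{2\pi i}{\ell}\Z$; letting $n$ range over $\N_0$ yields precisely $-\N_0+\frac{2\pi i}{\ell}\Z$. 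Because the real part $-n$ pins down $n$ uniquely, exactly one eigenvalue reaches $1$ at each such $s$, so the eigenspace for the eigenvalue $1$ is two-dimensional there.

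Finally I would convert this eigenvalue count into a statement about resonances. The Fredholm determinant of $\mc L_s$, equal to the Selberg zeta function of $C_\ell$, factors as $\prod_{n\ge0}(1-e^{-(s+n)\ell})^2$, and the transversality $\frac{d}{ds}(1-e^{-(s+n)\ell})=\ell\,e^{-(s+n)\ell}$, which equals $\ell\neq0$ at each zero, shows that this determinant vanishes to order exactly two at every point of $-\N_0+\frac{2\pi i}{\ell}\Z$ and nowhere else. The step requiring genuine care is the passage from zeros of the zeta function (equivalently, from eigenvalue-$1$ multiplicities of $\mc L_s$) to resonances with their correct multiplicities: I must rule out that any of these zeros is a purely topological zero. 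Here this is clean because the Euler characteristic of the cylinder vanishes, so the topological contribution to the divisor of the zeta function is empty; combined with the identification of resonances and zeta zeros this gives exactly the asserted resonance set with multiplicity two.
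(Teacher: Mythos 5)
Your proposal is correct and follows essentially the same route as the paper: block-diagonalize the cylinder transfer operator into two weighted composition operators, conjugate each block to the dilation $z\mapsto e^{-\ell}z$ so that the monomials $z^n$ give the eigensystem, conclude that $1$ is an eigenvalue exactly for $s\in-\N_0+\frac{2\pi i}{\ell}\Z$ with two-dimensional eigenspace, and then combine the Fredholm-determinant identity $Z_{C_\ell}(s)=\det(1-\TO_s)$ with the Borthwick--Judge--Perry factorization and $\chi(C_\ell)=0$ to identify the zeta zeros with the resonances, multiplicities included. If anything, your explicit product $\prod_{n\ge 0}\bigl(1-e^{-(s+n)\ell}\bigr)^2$ together with the transversality check is slightly more careful on the \emph{exact} order of each zero than the paper's one-line passage from the geometric multiplicity of the eigenvalue $1$ to the multiplicity of the zero.
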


The proof of Theorem~\ref{thm:zero} as well as our proof of Theorem~\ref{thm:specialzeros} is based on transfer operator techniques in combination with known properties of the Selberg zeta function. This approach allows us to prove even deeper and more general results than stated in Theorems~\ref{thm:zero} and \ref{thm:specialzeros}, as explained in the following.

Let $X$ be a Schottky surface, let $\mc R(X)$ denote the set of all resonances of $X$ (with multiplicities), let $L(X)$ denote the 
primitive geodesic length spectrum of $X$ (also with multiplicities), and let $\delta$ denote the Hausdorff dimension of the limit set of the fundamental group of $X$. We refer to Section~\ref{sec:prelims} below for more details on the objects used in this overview. 

As is well-known, the Selberg zeta function 
\[
 Z_X(s) \sceq \prod_{\ell\in L(X)}\prod_{k=0}^\infty \left(1 - e^{-(s+k)\ell}\right)
\]
(more precisely, the infinite product used in this definition) converges for $\Rea s >\delta$, and extends to an entire function, which we also denote by $Z_X$. The set of resonances $\mc R(X)$ is contained in the zeros of (the holomorphic continuation of) $Z_X$, respecting multiplicities. Thus, determining zeros of $Z_X$ may serve as a first step towards finding resonances of $X$.

Further, for (any choice of Schottky data of) $X$ there exists a transfer operator family $(\TO_{X,s})_{s\in\C}$ which acts as trace class operators on a certain Hilbert Bergman space and whose Fredholm determinant represents the Selberg zeta function $Z_X$, thus
\[
 Z_X(s) = \det\left(1-\TO_{X,s}\right).
\]
Finding zeros of $Z_X$ is therefore equivalent to determining parameters $s$ for which the transfer operator $\TO_{X,s}$ has an eigenfunction with eigenvalue $1$.

For the case that $X$ is elementary, thus, a hyperbolic cylinder, we can identify not only all such parameters $s$ but also the full eigenspaces of $\TO_{X,s}$.  

\begin{thm}\label{thm:ef_cyl}
Suppose that $X=C_\ell$ is a hyperbolic cylinder with central geodesic of (primitive) length $\ell$. Then $1$ is an eigenvalue of $\TO_{C_\ell,s}$ if and only if $s\in -\N_0 + \frac{2\pi i}{\ell}\Z$. In this case, the geometric multiplicity of the eigenvalue $1$ is $2$. 

For $s=-n+\frac{2\pi i}{\ell}\Z$, $n\in\N_0$, the eigenspace of $\TO_{C_\ell,s}$ with eigenvalue $1$ is essentially spanned by two copies of $z^n$.
\end{thm}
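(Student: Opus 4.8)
The plan is to compute the full spectrum of $\TO_{C_\ell,s}$ explicitly, exploiting that the fundamental group of $C_\ell$ is cyclic, so that the associated Schottky data consists of a single hyperbolic generator and the transfer operator degenerates to a very simple shape. First I would fix Schottky data in which the generator $\gamma$ of $\pi_1(C_\ell)$ is conjugated to the diagonal map $z\mapsto e^\ell z$, so that its fixed points are $0$ and $\infty$ and its derivative is the \emph{constant} $e^\ell$. The two Schottky disks can then be taken centered at the two fixed points; passing to the coordinate $w=1/z$ on the disk around $\infty$, both contracting inverse branches act as $z\mapsto e^{-\ell}z$ in the respective chart, again with constant derivative $e^{-\ell}$. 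Because $e^{-\ell}>0$, the weight $\bigl(e^{-\ell}\bigr)^s=e^{-\ell s}$ is unambiguous (no branch of the power has to be chosen), which is the feature that makes the cylinder fully tractable. Moreover, since the only reduced words over a single generator are the powers $\gamma^k$ and $\gamma^{-k}$, the admissibility structure forces $\TO_{C_\ell,s}$ to be the direct sum $\TO_{C_\ell,s}=\mc A_s\oplus\mc A_s$ of two copies of the weighted composition operator
\[
 (\mc A_s f)(z)=e^{-\ell s}\,f\bigl(e^{-\ell}z\bigr)
\]
on the Bergman space of a disk, one copy for each fixed point.

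The second step is to diagonalize $\mc A_s$. The monomials $z^n$, $n\in\N_0$, form an orthogonal basis of the Bergman space, and
\[
 \mc A_s\bigl(z^n\bigr)=e^{-\ell s}\,\bigl(e^{-\ell}z\bigr)^n=e^{-\ell(s+n)}z^n,
\]
so each $z^n$ is an eigenfunction of $\mc A_s$ with eigenvalue $e^{-\ell(s+n)}$. Since $\bigl|e^{-\ell(s+n)}\bigr|=e^{-\ell(\Rea s+n)}\to 0$ as $n\to\infty$, the operator $\mc A_s$ is of trace class, and because the monomials are complete its point spectrum is exactly $\{e^{-\ell(s+n)}:n\in\N_0\}$. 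Consequently the spectrum of $\TO_{C_\ell,s}$ is this same set, with every eigenvalue occurring with geometric multiplicity $2$, realized by the two copies of the corresponding monomial.

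It then remains to read off the eigenvalue $1$. The equation $e^{-\ell(s+n)}=1$ holds precisely when $\ell(s+n)\in 2\pi i\Z$, i.e.\ when $s\in -n+\tfrac{2\pi i}{\ell}\Z$; ranging over $n\in\N_0$ this gives $1$ as an eigenvalue exactly for $s\in -\N_0+\tfrac{2\pi i}{\ell}\Z$. For a fixed such parameter $s=-n_0+\tfrac{2\pi i}{\ell}\Z$, comparing real parts shows that $n=n_0$ is the \emph{only} index with $e^{-\ell(s+n)}=1$; hence the eigenvalue $1$ has geometric multiplicity exactly $2$, and its eigenspace is spanned by the two copies of $z^{n_0}$, as claimed.

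I expect the main obstacle to be the first step: pinning down the precise normalization of $\TO_{C_\ell,s}$ (the automorphy weight together with the Bergman inner product) so that it is genuinely the direct sum of the two operators $\mc A_s$ with the monomials as exact eigenfunctions, and confirming trace-class/completeness so that no part of the spectrum is overlooked. Once the operator is brought into this explicit constant-weight form, the diagonalization and the multiplicity bookkeeping are routine.
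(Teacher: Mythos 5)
Your proof is correct, and its core is the same as the paper's: exploit the block-diagonal $2\times 2$ structure of the cylinder transfer operator and diagonalize the resulting constant-weight composition operator on monomials. The differences are in the packaging. First, you normalize the Schottky data so that the generator is exactly $a_\ell\colon z\mapsto e^{\ell}z$ with disks centered at its fixed points, whereas the paper's precise version (Theorem~\ref{thm:cyl_full}) works with an \emph{arbitrary} choice of Schottky data $S_1=pa_\ell p^{-1}$ and transports the model eigenfunctions by $\tau_s(p)$ via the conjugation lemma (Lemma~\ref{lem:conjugation}), obtaining $\left(\tau_s(p)z^n,0\right)$ and $\left(0,\tau_s(pS)z^n\right)$. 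Since the operator $\TO_{C_\ell,s}$ depends on the choice of Schottky data, your argument strictly speaking establishes the claim only for your normalized choice; this is harmless for the ``essentially spanned'' formulation, but to cover every choice of data you would need either the paper's conjugation step or a separate argument that the eigenvalue-$1$ problem (including geometric multiplicity, which does not follow from the data-independence of the Fredholm determinant alone) is independent of the data. Second, to rule out further eigenfunctions the paper uses only local power-series uniqueness at the repelling fixed point (Proposition~\ref{babycase:eigenfunction}), while you invoke completeness of the monomials in the Bergman space of a disk centered at $0$. Your route buys slightly more — the full point spectrum of the model operator, hence of $\TO_{C_\ell,s}$, not just the eigenvalue $1$ — whereas the paper's local formulation is the one that carries over to the non-elementary case (Theorem~\ref{thm:arb_full}), where eigenfunctions are controlled only near repelling fixed points of the generators.
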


We refer to Theorem~\ref{thm:cyl_full} below for an exact formula for the eigenfunctions.

For the case that $X$ is non-elementary, certain hyperbolic cylinders $C$ provide covers of $X$ (of infinite degree). Even though eigenfunctions of $\TO_{X,s}$ with eigenvalue $1$ are subject to more symmetries than the eigenfunctions of $\TO_{C,s}$ with eigenvalue $1$, for certain choices of $C$ and certain parameters $s$ we can construct eigenfunctions of $\TO_{X,s}$ from those of $\TO_{C,s}$. 

Throughout let $\chi(X)$ denote the Euler characteristic of $X$. 

\begin{thm}\label{thm:ef_arb}
Let $X$ be a Schottky surface and let $s\in -\N_0$. Then $1$ is an eigenvalue of $\TO_{X,s}$ with geometric multiplicity being at least $1-\chi(X)$.  

For $s=-n$, $n\in\N_0$, we can construct $1-\chi(X)$ linear independent eigenfunctions of $\TO_{X,s}$ with eigenvalue $1$ from the eigenfunctions provided by Theorem~\ref{thm:ef_cyl} for certain hyperbolic cylinders covering $X$.
\end{thm}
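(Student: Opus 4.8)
The plan is to translate the lower bound $1-\chi(X)$ into the rank of the Schottky group and then to produce one eigenfunction per generator out of the cylinder eigenfunctions of Theorem~\ref{thm:ef_cyl}. Since $X$ is a Schottky surface, its fundamental group $\Gamma$ is free of some rank $r$ and $X$ is homotopy equivalent to a wedge of $r$ circles, so $\chi(X)=1-r$ and the claimed number $1-\chi(X)$ equals $r$. I would fix Schottky generators $g_1,\dots,g_r$ of $\Gamma$ (together with a choice of Schottky data), write $g_{j+r}=g_j^{-1}$, and denote by $D_1,\dots,D_{2r}$ the associated disks, so that $\TO_{X,s}$ acts on a Hilbert Bergman space built from the $D_a$. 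The goal is to attach to each generator $g_j$ a single eigenfunction $f^{(j)}$ of $\TO_{X,-n}$ and to prove that $f^{(1)},\dots,f^{(r)}$ are linearly independent.

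For each $j$ the cyclic group $\langle g_j\rangle$ defines a hyperbolic cylinder $C_j$ covering $X$. In coordinates adapted to $g_j$, with fixed points $0,\infty$ and $g_j(z)=\lambda_j z$, Theorem~\ref{thm:ef_cyl} provides at $s=-n$ the eigenfunction essentially equal to $z^n$. Invariantly, $z^n$ is the unique (up to scalar) polynomial of degree $\le 2n$ fixed by the weighted composition operator $P\mapsto (g_j')^{-n}\,(P\circ g_j)$ occurring in $\TO_{X,-n}$: it is the zero-weight vector of the $(2n+1)$-dimensional representation $\Sym^{2n}$ of $\Gamma$, as every other monomial $z^k$ is scaled by $\lambda_j^{\,k-n}\ne 1$. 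Call this polynomial $P_j$. The candidate eigenfunction $f^{(j)}=(f^{(j)}_a)_a$ for $\TO_{X,-n}$ is then obtained by restricting $P_j$, and the $\Gamma$-translates of $P_j$ dictated by the coset structure of $\langle g_j\rangle$ in $\Gamma$, to the disks $D_a$, in such a way that on $D_j$ and $D_{\overline{\jmath}}$ the two copies of $z^n$ of Theorem~\ref{thm:ef_cyl} are recovered.

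The eigenvalue equation $\TO_{X,-n}f^{(j)}=f^{(j)}$ is a coupled system whose $a$-th component sums weighted composition operators over all admissible transitions, i.e.\ over all letters except the single backtracking letter $\overline{a}$. The crux, and the step I expect to be the main obstacle, is to reconcile these additional, off-diagonal transitions, which are present for $X$ but absent for the purely diagonal cylinder operator $\TO_{C_j,s}$. I would handle this by combining two facts: first, the self-transition reproduces $P_j$ because $P_j$ is fixed by the operator $P\mapsto (g_j')^{-n}(P\circ g_j)$; and second, the no-backtracking condition together with the nesting $g_b(D_a)\subset D_{\overline{b}}$ of the Schottky disks should make the remaining contributions telescope, each transported translate being matched by exactly one summand, so that the sum recombines to the value of $f^{(j)}_a$ on $D_a$. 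Carrying this out requires a careful direct computation with the explicit operators, checking convergence of the occurring series and verifying that the resulting tuples lie in the Hilbert Bergman space; this is also where the additional symmetries of $\TO_{X,s}$-eigenfunctions should collapse the two cylinder copies into a single eigenfunction per generator.

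Finally, I would establish linear independence of $f^{(1)},\dots,f^{(r)}$ from the fixed-point data: the functions $f^{(j)}$ are distinguished by their local behavior near the attracting and repelling fixed points of the distinct generators $g_j$, so that a vanishing linear combination forces all coefficients to be zero. This produces $r=1-\chi(X)$ linearly independent eigenfunctions of $\TO_{X,-n}$ with eigenvalue $1$, and hence the asserted lower bound on the geometric multiplicity.
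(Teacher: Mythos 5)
Your reduction $1-\chi(X)=r$ and the one-eigenfunction-per-generator strategy agree with the paper, but the central step --- making the off-diagonal (non-backtracking) entries of $\TO_{X,-n}$ act consistently on your candidate tuple --- is precisely the step you leave open (``should telescope'', ``should collapse''), and the mechanism you sketch is not the one that works. A tuple built from $\Gamma$-translates of $P_j$ ``dictated by the coset structure of $\langle g_j\rangle$ in $\Gamma$'' involves infinitely many translates when $X$ is non-elementary (the coset space is infinite), and at $s=-n\le 0$ such Poincar\'e-type sums have no reason to converge, since convergence of averages of $|\gamma'|^s$ requires $\Rea s>\delta$; moreover, each component of $\TO_{X,s}f$ is a \emph{finite} sum of $2r-1$ terms, so there is nothing that can telescope against an infinite family of translates. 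As written, the construction is therefore not carried out, and the route you chose would most likely fail.

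The paper's construction is different and entirely finite: the eigenfunction attached to the generator $S_{j_0}$ is supported on only the two components $H^2(\mc D_{j_0})\oplus H^2(\mc D_{-j_0})$ and is \emph{zero on all other disks}. On $\mc D_{j_0}$ it is the cylinder eigenfunction $f_{j_0}$ (a conjugate of $z^n$, via Proposition~\ref{babycase:eigenfunction} and Lemma~\ref{lem:conjugation}), and on $\mc D_{-j_0}$ it is the sign-twisted partner $f_{-j_0}=-(-1)^s\,\tau_s(k_{j_0})f_{j_0}$, where $k_{j_0}S_{j_0}k_{j_0}^{-1}=S_{j_0}^{-1}$ (Lemma~\ref{lem:inverse}). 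The two ``diagonal'' entries then reproduce $f_{\pm j_0}$ by the eigenfunction property and Lemma~\ref{lem:conjugation}, while every other component of $\TO_s f$ receives exactly two contributions, $\tau_s(S_{j_0})f_{j_0}+\tau_s(S_{-j_0})f_{-j_0}$, and these \emph{cancel identically} by Proposition~\ref{prop:vanishing}. This cancellation is where the hypothesis $s\in-\N_0$ enters: it holds if and only if $\Ima s=0$, which is exactly why the construction does not extend to the non-real points of $-\N_0+\tfrac{2\pi i}{\ell}\Z$. Your instinct that the two cylinder copies ``collapse'' into a single eigenfunction per generator is correct --- the relative sign $-(-1)^s$ linking the two components is precisely how --- and with disjoint supports for distinct $j_0$ the linear independence is immediate, with no need for the fixed-point analysis you propose. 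Without the two-component support and the cancellation lemma, however, your proposal has a genuine gap at its central step.
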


Theorem~\ref{thm:arb_full} below provides exact formulas for these eigenfunctions. 

To facilitate distinguishing in the following between results known for elementary Schottky surfaces only and those valid for all Schottky surfaces, we denote the (rather, any) hyperbolic cylinder with central geodesic of (primitive) length $\ell$ by $C_\ell$. Surfaces denoted by $X$ refer to arbitrary (elementary or non-elementary) Schottky surfaces. 

Theorem~\ref{thm:ef_cyl} yields that the set of zeros of the Selberg zeta function $Z_{C_\ell}$ for $X=C_\ell$ is $-\N_0+\frac{2\pi i}{\ell}\Z$, each zero having multiplicity $2$. Theorem~\ref{thm:ef_arb} implies that for an arbitrary Schottky surface $X$, the Selberg zeta function $Z_X$ has zeros at $-\N_0$, and that the multiplicity of the zero $s=-n$, $n\in\N_0$, is at least $1-\chi(X)$. 

For completeness we note that for the case $X=C_\ell$ being a hyperbolic cylinder, Theorem~\ref{thm:ef_arb} estimates the multiplicity of the zeros of $Z_{C_\ell}$ at $-\N_0$ from below by $1$ (note that $\chi(C_\ell)=0$), and hence, for the elementary Schottky surfaces,  Theorem~\ref{thm:ef_cyl} is a much stronger result.

For a generic Schottky surface $X$, the zeros of the Selberg zeta function $Z_X$ are not necessarily given only by the resonances of $X$; they may also be topological zeros. These two types of zeros may overlap, causing a zero of $Z_X$ to be of rather high multiplicity. 

Borthwick--Judge--Perry \cite{BJP} showed that (for any Schottky surface $X$) the Selberg zeta function $Z_X$ admits the factorization
\begin{equation}\label{SZF_factorization}
 Z_X(s) = e^{p(s)} G_\infty(s)^{-\chi(X)} P_X(s),
\end{equation}
where $p$ is a certain polynomial of degree at most $2$ (which is of no concern for our application), the function $G_\infty$ is 
\[
 G_\infty(s) = (2\pi)^{-s} \Gamma(s) G(s)^2
\]
with $G$ being the Barnes G-function, and $P_X$ is the entire function
\[
 P_X(s) \sceq s^{m(0)} \prod_{\substack{r\in \mc R(X)\\ r\not=0}} \left(1-\frac{s}{r}\right) \exp\left( \frac{s}{r} + \frac{s^2}{2r^2}\right),
\]
where $m(0)$ denotes the multiplicity of $s=0$ as a resonance of $X$. 

Since the resonances of $X$ are exactly the zeros of $P_X$, including multiplicities, the topological zeros of $Z_X$ are determined (again including multiplicities) by the zeros of $G_\infty(\cdot)^{-\chi(X)}$. The zeros of $G_\infty$ are $s=-n$, $n\in\N_0$, with multiplicity $2n+1$.

Theorem~\ref{thm:zero} then follows by observing that for $s=0$, the geometric multiplicity of the eigenvalue $1$ of $\TO_{X,s}$ exceeds the multiplicity of $s=0$ as topological zero of $Z_X$ by $1$.  

If $X$ non-elementary and hence $\chi(X)\not=0$, and $s < 0$ then such a comparison does not yet provide any insight. However, as shown in \cite{Bunke_Olbrich, Olbrich}, for \emph{non-elementary} Schottky surfaces all zeros at $s\in -\N$ are topological.

Theorem~\ref{thm:specialzeros} follows from combining Theorem~\ref{thm:ef_cyl} with \eqref{SZF_factorization} and the fact that the Euler characteristic of hyperbolic cylinders vanishes, and hence each zero of the Selberg zeta function is indeed a resonance of the same multiplicity.

As mentioned above, the resonance set for hyperbolic cylinders is known for long. The strength of the proof method presented here is not to provide an alternative (rather involved) proof of Theorem~\ref{thm:specialzeros}. Its strength is in establishing Theorem~\ref{thm:ef_cyl} which, in view of the power of transfer operators for dynamical approaches to Laplace eigenfunctions, is an important step towards dynamical and transfer-operator-based characterizations of resonant states as well as period functions for them \cite{Mayer_thermoPSL, Mayer_thermo, Lewis_Zagier,Chang_Mayer_eigen, Chang_Mayer_extension,BLZm, Bruggeman_Muehlenbruch,Pohl_mcf_Gamma0p,Pohl_mcf_general,Moeller_Pohl}.

The structure of this note is as follows. In Section~\ref{sec:prelims} below we provide the necessary background information on hyperbolic surfaces, resonances, Schottky surfaces and transfer operators. The precise formulas for the eigenfunctions of the transfer operators as well as the proofs of Theorems~\ref{thm:zero}-\ref{thm:ef_arb} are the content of Section~\ref{sec:proof_resonance} below.

\textit{Acknowledgement.} AP wishes to thank the Max Planck Institute for Mathematics in Bonn for hospitality and excellent working conditions during the preparation of this manuscript. Further, she acknowledges support by the DFG grant PO 1483/2-1.

\section{Preliminaries}\label{sec:prelims}

In this section we provide the necessary background information on the entities used for the proofs of Theorems~\ref{thm:zero}-\ref{thm:ef_arb}, with the exception of the Selberg zeta function which is already surveyed in the Introduction.

\subsection{Hyperbolic surfaces and resonances}\label{sec:hyperbolic}

We use the upper half plane model 
\[
 \h\sceq \{ z\in\C : \Ima z>0\},\quad ds_z^2 = \frac{dz\,d\overline{z}}{(\Ima z)^2}
\]
for the hyperbolic plane. In this model, the geodesic boundary $\partial\h$ of $\h$ is identified with $\overline\R \sceq \R\cup\{\infty\}$. 

We take advantage of the standard embedding of the hyperbolic plane $\h$ into the Riemann sphere $\wh\C = \C \cup\{\infty\} \cong P^1_\C$. In this embedding, $\partial\h$ is indeed the topological boundary of $\h$.

We identify the group of orientation-preserving Riemannian isometries of $\h$ with $\PSL_2(\R) = \SL_2(\R)/\{\pm \id\}$. If an element $g\in\PSL_2(\R)$ is represented by the matrix
\[
 \mat{a}{b}{c}{c} \in \SL_2(\R)
\]
then we may denote $g$ by 
\[
 g = \bmat{a}{b}{c}{d}.
\]
The action of $\PSL_2(\R)$ extends continuously (even smoothly) to all of $\wh\C$. With all identifications in place, for $g=\textbmat{a}{b}{c}{d}\in\PSL_2(\R)$ and $z\in\wh\C$ we have
\[
 g.z = 
 \begin{cases}
 \infty & \text{if $z=\infty$, $c=0$, or $z\not=\infty$, $cz+d=0$,}
 \\
 \frac{a}{c} & \text{if $z=\infty$, $c\not=0$,}
 \\
 \frac{az+b}{cz+d} & \text{otherwise.}
 \end{cases}
\]

An element $g\in\PSL_2(\R)$ is called \emph{hyperbolic} if it fixes exactly two points in $\partial\h$. The prototypical hyperbolic elements are 
\[
 a_L \sceq \mat{e^{\frac{L}2}}{0}{0}{e^{-\frac{L}{2}}}, \qquad (L>0),
\]
which act on $\h$ as multiplication by $e^L$. Any hyperbolic element in $\PSL_2(\R)$ is conjugate to some (unique) $a_L$ with $L>0$. For any hyperbolic element $h\in\PSL_2(\R)$ we let $x_+(h)$ and $x_-(h)$ denote the attracting and repelling fixed point, respectively, under its iterated action. Thus, for any $z\in\h$ we have
\[
 x_+(h) = \lim_{n\to\infty} h^n.z\quad\text{and}\quad x_-(h) = \lim_{n\to\infty} h^{-n}.z.
\]

Given a Fuchsian group $\Gamma$, that is, a discrete subgroup $\Gamma$ of $\PSL_2(\R)$, we denote the space of its orbits on $\h$ by $\Gamma\backslash\h$. This quotient space inherits from $\h$ the structure of a Riemannian orbifold. If $\Gamma$ is torsion-free then $\Gamma\backslash\h$ is even a Riemannian manifold.

As is well-known, hyperbolic surfaces are precisely those Riemannian manifolds that are isometric to a quotient manifold of the form $\Gamma\backslash\h$ for some torsion-free Fuchsian group $\Gamma$.

The Laplace--Beltrami operator $\Delta_\h$ on $\h$ is given by 
\[
 \Delta_\h\sceq -y^2\left(\partial_x^2 + \partial_y^2\right) \qquad (z=x+iy;\ x,y\in\R, y>0).
\]
For any hyperbolic surface $X$, the Laplace operator $\Delta_\h$ induces the Laplace operator 
\[
 \Delta_X \colon L^2(X) \to L^2(X)
\]
on $X$. Its resolvent
\[
 R_X(s) \sceq \left(\Delta_X - s(1-s)\right)^{-1}\colon L^2(X)\to L^2(X)
\]
is defined for $s\in\C$ with $\Rea s > 1/2$ and $s(1-s)$ not being in the $L^2$-spectrum of $\Delta_X$. It extends to a meromorphic family
\[
 R_X(s)\colon L^2_{\mathrm{comp}}(X) \to H^2_{\mathrm{loc}}(X)
\]
on $\C$, see \cite{Mazzeo_Melrose,GZ_upper_bounds}. The poles of $R_X$ are the \emph{resonances} of $X$. We let 
\[
 \mc R(X)
\]
denote the set of resonances, repeated according to multiplicities.

\subsection{Schottky surfaces and Schottky data}

Schottky surfaces are precisely those hyperbolic surfaces that are convex cocompact and  of infinite area (and without orbifold points). The fundamental group of a Schottky surface is (isomorphic to) a (Fuchsian) Schottky group, that is, a geometrically finite, non-cofinite, torsion-free Fuchsian group without parabolic elements. 

As shown by Button \cite{Button}, every (Fuchsian) Schottky group arises from a certain geometric construction that we recall in what follows, and all Fuchsian groups given by this construction are indeed Schottky. 

To simplify a uniform presentation of the geometric construction we recall that we identify the Riemann sphere $\wh\C$ with $\C\cup\{\infty\}$. We say that a subset $\mc D\subseteq \wh\C$ is a \emph{Euclidean disk} if there exists an element $g\in\PSL_2(\R)$ such that $g.\mc D$ is contained in $\C$  and is a Euclidean disk in the usual sense. One easily checks that the notion of Euclidean disk does not depend on the choice of $g$.

To construct a Schottky group, choose $r\in\N$ and fix $2r$ open Euclidean disks in $\wh\C$ that are centered on $\partial\h$ and have mutually disjoint closures, say
\begin{equation}\label{disks_numbers}
 \mc D_1,\mc D_{-1},\ldots, \mc D_r, \mc D_{-r}.
\end{equation}
For convenience we here deviate from the successive enumeration as used in the standard presentation of the construction. For $j\in\{1,\ldots, m\}$ let $S_j$ be an element of $\PSL_2(\R)$ that maps the exterior of $\mc D_j$ to the interior of $\mc D_{-j}$. Then the subgroup $\Gamma$ of $\PSL_2(\R)$ generated by $S_1,\ldots, S_r$ is Schottky. All the generating element are hyperbolic.

We stress that the numbering of the disks in \eqref{disks_numbers} does not reflect in any way the positions of the disks to each other. In fact, the construction and the arising Schottky group $\Gamma$ depend on the choice of this numbering. Moreover, the constructed Schottky group $\Gamma$ depends on the choice of the elements $\gamma_j$, $j=1,\ldots, r$. For future reference we set
\[
 S_{-j}\sceq S_j^{-1} \qquad (j=1,\ldots, r),
\]
and we refer to the tuple
\[
 \left(r, \big\{\mc D_{\pm j}\big\}_{j=1}^r, \{ S_{\pm j}\}_{j=1}^r\right)
\]
as a \emph{Schottky data} for $\Gamma$. 

To facilitate notation, we use here and throughout
\[
 \big\{ E_{\pm j} \big\}_{j=1}^m \sceq \big\{ E_j, E_{-j} : j=1,\ldots, m\} 
\]
for $E\in\{\mc D, S\}$.

One easily sees that for any fixed Schottky group $\Gamma$, there exist infinitely many choices of Schottky data for $\Gamma$. The number $r$ of generators is an invariant in this construction.

Starting with a Schottky surface $X$, an additional choice in this construction is possible, namely the one of the precise Schottky group $\Gamma$ such that $X$ is isometric to $\Gamma\backslash\h$. We call a tuple 
\[
 \left( \Gamma, r, \big\{\mc D_{\pm j}\big\}_{j=1}^r, \{S_{\pm j}\}_{j=1}^r\right)
\]
a \emph{Schottky data} for $X$ if $\Gamma$ is isomorphic to the fundamental group of $X$, and 
\[
 \left(r, \big\{\mc D_{\pm j}\big\}_{j=1}^r, \{S_{\pm j}\}_{j=1}^r\right)
\]
is a Schottky data for $\Gamma$. As above, $X$ admits infinitely many choices of Schottky data, and the number $r$ of generators of $\Gamma$ is an invariant.

For any Schottky surface $X$ we can find Schottky data such that none of the Euclidean disks $\mc D_{\pm j}$, $j=1,\ldots, r$, contains $\infty$. However, for the discussions in the following sections it is rather convenient to refrain from a restriction to such special Schottky data.

Schottky surfaces $X$ for which the number of generators in a (and hence any) Schottky data is $r=1$ are called \emph{hyperbolic cylinder}. The generator $S_1$ in any Schottky data is conjugate to $a_L$ for some $L>0$. The value of $L$ is independent of the choice of the Schottky data. It coincides with the (primitive) length of the unique non-oriented geodesic on $X$. By slight abuse of notions, we call $X$ (and any hyperbolic surface isometric to $X$) the hyperbolic cylinder with central geodesic of length $L$ and denote it by $C_L$.

\subsection{Transfer operators}\label{sec:TO}

To any Schottky surface $X$ and any choice of Schottky data for $X$ there is associated a transfer operator family which is crucial for our argumentation. In the following we provide a brief presentation of this transfer operator family, its domain of definition and its properties. We refer to \cite{Guillope_Lin_Zworski, Borthwick_book} for proofs and more details. Before we can state the definition of the transfer operator family we need a few preparations. 

For $s\in\C$, any subset $U\subseteq\overline\R$, any element $g = \textbmat{a}{b}{c}{d}\in\PSL_2(\R)$ and any function $f\colon U\to \C$ we set
\[
  \tau_s(g^{-1})f(x) \sceq  \big(g'(x)\big)^s f(g.x) = |cx+d|^{2s} f\left( \frac{ax+b}{cx+d}\right)\qquad (x\in U)
\]
whenever this is well-defined. 

In the following we make use of holomorphic extensions of this definition to functions defined on certain open subsets $U\subseteq\wh C$ and applied to certain subsets of $\PSL_2(\R)$. We stress that such holomorphic extensions are subject to choices, and that none of the possible holomorphic extensions can be extended to a group action of $\PSL_2(\R)$. However, for all applications arising here there is indeed at least one holomorphic extension. The precise possibilities, in particular the choice of the logarithm, heavily depend on the chosen Schottky data. All results obtained are independent of the choice of the holomorphic extension. Since in all situations arising here the possible choices need to obey only finitely many restrictions which are easily seen to be satisfiable, we refrain throughout from this rather tedious discussion and tacitly make appropriate choices. 

After having fixed such choices we say that a function $f\colon U\to \C$ defined on some neighborhood $U$ of $\infty$ is \emph{holomorphic at $\infty$ (for the parameter $s$)}  if there exists  $g\in\PSL_2(\R)$ such that $g.U\subseteq \C$ and the map
\begin{equation}\label{def_holom}
 \tau_s(g)f\colon g.U\to \C
\end{equation}
is holomorphic at $g.\infty$. We remark that for all $g\in\PSL_2(\R)$, the operator $\tau_s(g)$ is \emph{holomorphy-preserving} as long as it is well-defined. We remark further that this notion of holomorphy depends on the value of $s$.

For any open bounded subset $U\subseteq\C$ we let $H^2(U)$ denote the \emph{Hilbert Bergman space} on $U$, that is the space
\[
 H^2(U) \sceq \left\{ \text{$f\colon U\to \C$ holomorphic} : \int_U \|f\|^2 \dvol < \infty  \right\}
\]
of holomorphic square-integrable functions on $U$, endowed with the inner product
\[
 \langle f,g\rangle \sceq \int_U \langle f(z), g(z)\rangle \dvol(z).
\]
We extend this definition to Euclidean disks of $\wh C$ possibly containing $\infty$ by combining it with the definition \eqref{def_holom} of holomorphy at $\infty$ from above. In all our applications, the operator applied to $H^2(U)$ depends on a parameter $s$ which is then used for the natural definition of holomorphy at $\infty$. This has the effect that the space $H^2(U)$ itself effectively does not depend on $s$, only the choice of the transformation map in a manifold chart.

Let $X$ be a Schottky surface and let 
\[
 \left(\Gamma, r,  \{\mc D_{\pm j}\}_{j=1}^m, \{S_{\pm j} \}_{j=1}^m \right)
\]
be a choice of Schottky data for $X$. 

Let 
\[
 \mc H \sceq \bigoplus_{j=1}^m H^2\big(\mc D_j\big) \oplus \bigoplus_{k=1}^m H^2\big(\mc D_{-k}\big)
\]
denote the direct sum of the Hilbert Bergman spaces on the Euclidean disks from the Schottky data. We identify functions $f\in\mc H$ with the function vectors
\begin{equation}\label{coordinates}
 \bigoplus_{j=1}^m f_j \oplus \bigoplus_{k=1}^m f_{-k}
\end{equation}
with 
\[
 f_\ell \in H^2\big( \mc D_\ell \big)\qquad (\ell\in\{\pm 1,\ldots,\pm r\}).
\]

The transfer operator $\TO_s$ with parameter $s\in\C$ associated to $X$ and the Schottky data is the operator 
\[
 \TO_s\colon \mc H\to \mc H
\]
which, with respect to the presentation \eqref{coordinates}, has the matrix presentation
\[
\TO_s = 
\begin{pmatrix} 
\tau_s\big(S_1\big) & \tau_s\big(S_2\big) & \ldots & \tau_s\big(S_r\big) & 0 & \tau_s\big(S_{-2}\big) & \ldots & \tau_s\big(S_{-r}\big)
\\[2mm]
\tau_s\big(S_1\big) & \tau_s\big(S_2\big) & \ldots & \tau_s\big(S_r\big) & \tau_s\big(S_{-1}\big) & 0 & \ldots & \tau_s\big(S_{-r}\big)
\\[2mm]
\vdots & \vdots & \ldots & \vdots & \vdots &  \vdots & \ddots & \vdots
\\[2mm]
\tau_s\big(S_1\big) & \tau_s\big(S_2\big) & \ldots & \tau_s\big(S_r\big) & \tau_s\big(S_{-1}\big) & \tau_s\big(S_{-2}\big) & \ldots & 0
\\[2mm]
0 & \tau_s\big(S_2\big) & \ldots & \tau_s\big(S_r\big) &  \tau_s\big(S_{-1}\big) & \tau_s\big(S_{-2}\big) & \ldots & \tau_s\big(S_{-r}\big)
\\[2mm]
\tau_s\big(S_1\big) & 0 & \ldots & \tau_s\big(S_r\big) & \tau_s\big(S_{-1}\big) & \tau_s\big(S_{-2}\big) & \ldots & \tau_s\big(S_{-r}\big)
\\[2mm]
\vdots & \vdots & \ddots & \vdots & \vdots & \vdots & \ldots & \vdots
\\[2mm]
\tau_s\big(S_1\big) & \tau_s\big(S_2\big) & \ldots & 0 & \tau_s\big(S_{-1}\big) & \tau_s\big(S_{-2}\big) & \ldots & \tau_s\big(S_{-r}\big)
\end{pmatrix}.
\]
This operator is of trace class. Its Fredholm determinant represents the Selberg zeta function 
\[
 Z_X(s) = \det(1-\TO_s).
\]

\section{Proofs of precisions of Theorems~\ref{thm:zero}-\ref{thm:ef_arb}}

In this section we provide the precise formulas for the eigenfunctions as announced in the Introduction, and we present proofs of Theorems~\ref{thm:zero}-\ref{thm:ef_arb}. For convenience we refer to eigenfunctions and eigenspaces with eigenvalue $1$ by $1$-eigenfunctions and $1$-eigenspaces, respectively. 

Each transfer operator considered in this note is build up from operators of the form $\tau_s(h)$, $h\in\PSL_2(\R)$ hyperbolic. For that reason, we first consider, for any $L>0$, the prototypical hyperbolic element
\[
 a_L = \bmat{e^{L/2}}{0}{0}{e^{-L/2}}
\]
with attracting fixed point $\infty$ and repelling fixed point $0$, and study, in Proposition~\ref{babycase:eigenfunction} below, for the operator $\tau_s(a_L)$ the $1$-eigenspaces of functions holomorphic in $0$. With Lemma~\ref{lem:conjugation} we then clarify how these $1$-eigenfunctions relate to $1$-eigenfunctions with corresponding regularity of $\tau_s(h)$ for an arbitrary hyperbolic element $h\in\PSL_2(\R)$. 

Since the transfer operator of a hyperbolic cylinder essentially is the \emph{direct sum} of two of these prototypical operators, the proof of Theorem~\ref{thm:ef_cyl} follows then immediately from Proposition~\ref{babycase:eigenfunction} and Lemma~\ref{lem:conjugation}, see Theorem~\ref{thm:cyl_full} below.

Then we study for which parameters $s$ we can construct from the $1$-eigenfunctions of the transfer operators of hyperbolic cylinders $1$-eigenfunctions for the transfer operators of arbitrary Schottky surfaces. Since the latter transfer operators in general do not enjoy a direct sum structure, the results obtained are less sharp than for hyperbolic cylinders, see Proposition~\ref{prop:vanishing} and Theorem~\ref{thm:arb_full} below. However, for nonpositive integers we can construct explicit $1$-eigenfunctions.

From the results on the existence and counting of $1$-eigenfunctions, the proofs of Theorems~\ref{thm:zero}-\ref{thm:specialzeros} then follow immediately, see Section~\ref{sec:proof_resonance} below.

\subsection{Eigenfunctions for transfer operators for hyperbolic cylinders}

In the following proposition we consider $\tau_s(a_L)$ to act on a space of functions that are defined on a suitable neighborhood of $0$. The result and the proof of the proposition shows that the exact neighborhood does not need to be specified any further as long as it is chosen in such a way that $\tau_s(a_L)$ is well-defined.

\begin{prop}\label{babycase:eigenfunction}
Let $L>0$. Then $\ACT{a_L}$ has a $1$-eigenfunction that is holomorphic in $0$ if and only if $s\in -\N_0 + \frac{2\pi i}{L}\Z$. For $s\in  -n+\frac{2\pi i}{L}\Z$, $n\in\N_0$, the space of such $1$-eigenfunctions is spanned by $z^n$.
\end{prop}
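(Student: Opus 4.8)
The plan is to reduce the eigenvalue equation to a condition on Taylor coefficients. First I would make the operator explicit on functions holomorphic near the repelling fixed point $0$. Taking $g=a_{-L}$, so that $g^{-1}=a_L$, the associated M\"obius map is the scaling $g.z=e^{-L}z$ and the automorphy factor $\big(g'(z)\big)^s$ is the constant $(e^{-L})^s$; hence directly from the definition of $\tau_s$,
\[
 \tau_s(a_L)f(z) = e^{-Ls}\, f\big(e^{-L}z\big)
\]
for every $f$ holomorphic on a neighborhood $U$ of $0$. Since $z\mapsto e^{-L}z$ maps $U$ into itself (after shrinking) and the prefactor is a constant, $\tau_s(a_L)f$ is again holomorphic at $0$, so the equation $\tau_s(a_L)f=f$ is an identity of holomorphic germs at $0$.

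Next I would expand $f$ in its Taylor series $f(z)=\sum_{n\ge 0}c_n z^n$ about $0$, which is legitimate precisely because $f$ is holomorphic \emph{at} $0$ (genuine Taylor, not Laurent, expansion). Applying the operator termwise gives $\tau_s(a_L)f(z)=\sum_{n\ge 0}c_n\, e^{-L(s+n)}z^n$, and by uniqueness of Taylor coefficients the equation $\tau_s(a_L)f=f$ is equivalent to the decoupled system $c_n\big(e^{-L(s+n)}-1\big)=0$ for all $n\in\N_0$. The arithmetic heart is then the elementary equivalence $e^{-L(s+n)}=1 \iff s\in -n+\frac{2\pi i}{L}\Z$.

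From here both assertions follow. A nonzero $f$ forces some $c_n\neq 0$, hence $e^{-L(s+n)}=1$ for that $n$, so a $1$-eigenfunction holomorphic at $0$ can exist only if $s\in\bigcup_{n\in\N_0}\big(-n+\frac{2\pi i}{L}\Z\big)=-\N_0+\frac{2\pi i}{L}\Z$; conversely, for such $s$ the monomial $z^n$ is manifestly a $1$-eigenfunction. For the dimension count I would fix $s\in -\N_0+\frac{2\pi i}{L}\Z$ and observe that there is a \emph{unique} $m\in\N_0$ with $e^{-L(s+m)}=1$: the condition $s+m\in\frac{2\pi i}{L}\Z\subseteq i\R$ forces $\Rea(s)+m=0$, i.e.\ $m=-\Rea(s)=n$. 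Consequently $c_m=0$ for every $m\neq n$ while $c_n$ remains free, so the $1$-eigenspace equals $\C\cdot z^n$, spanned by $z^n$.

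The only genuinely delicate points are bookkeeping ones rather than analytic ones. One must evaluate the automorphy factor in the form $\big(g'(z)\big)^s$, which yields $e^{-Ls}$ for $a_L$ and thereby the correct sign placing the spectrum at $-\N_0$; and one must use that "holomorphic in $0$" excludes negative powers, which is exactly what pins the eigenspace near the repelling fixed point down to the single monomial. Everything else is the routine identity-theorem comparison of power-series coefficients.
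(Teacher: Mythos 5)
Your proof is correct and takes essentially the same route as the paper's: expand $f$ in its Taylor series at $0$, apply $\tau_s(a_L)$ termwise using $\tau_s(a_L)f(z)=e^{-Ls}f(e^{-L}z)$, and compare coefficients to obtain $c_n\big(e^{-L(s+n)}-1\big)=0$ for all $n\in\N_0$. Your explicit observation that the real part forces a \emph{unique} $m$ with $e^{-L(s+m)}=1$ is the same fact the paper uses (slightly more tacitly) to conclude $a_m=0$ for $m\neq n$, so the eigenspace is exactly $\C\cdot z^n$.
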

 
\begin{proof}
Let $s\in\C$ and let $f$ be a $1$-eigenfunction for $\ACT{a_L}$ that is holomorphic in $0$. We deduce properties on $s$ and $f$. Holomorphy of $f$ in $0$ yields a power series expansion 
\[
f(z) = \sum_{n=0}^\infty a_n z^n
\]
for suitable\footnote{One can even show that $a_n\in\R$.} $a_n\in\C$, $n\in\N_0$, that is valid in a certain neighborhood $U$ of $0$. It follows that for all $z\in U$, we have
\[
 \ACT{a_L}f(z) = e^{-Ls} \sum_{n=0}^\infty a_n e^{-Ln}z^n = \sum_{n=0}^\infty a_n e^{-L(s+n)}z^n.
\]
Then $f=\ACT{a_L}f$ and uniqueness of power series expansions imply that for all $n\in\N_0$ either $a_n=0$ or $e^{-L(s+n)}=1$. The latter property is equivalent to 
\[
 s\in -n + \frac{2\pi i}{L}\Z.
\]
Thus, since $f$ is a $1$-eigenfunction of $\ACT{a_L}$ and hence $f\not=0$ it follows that $s\in-n+\frac{2\pi i}L\Z$ for some $n\in\N_0$, and $a_m=0$ for $m\in\N_0, m\not=n$, and  $f(z) = a_nz^n$ with $a_n\not=0$. 

Conversely, $f(z)=z^n$ is obviously a holomorphic $1$-eigenfunction for $\ACT{a_L}$ for all $s\in -n+\frac{2\pi i}{L}\Z$.
\end{proof}

The following lemma allows us to transfer the result of Proposition~\ref{babycase:eigenfunction} to any hyperbolic element. Recall from Section~\ref{sec:hyperbolic} that for any hyperbolic element $h\in\PSL_2(\R)$ its  $x_\pm(h)$ denote the attracting and repelling fixed points are denoted by $x_\pm(h)$.

\begin{lemma}\label{lem:conjugation}
Let $s\in\C$. Suppose that $g,h\in \PSL_2(\R)$ are hyperbolic elements for which we find $p\in \PSL_2(\R)$ such that $g=php^{-1}$. Suppose further that $f_h$ is a $1$-eigenfunction of $\ACT{h}$ that is holomorphic at the repelling fixed point $x_-(h)$ of $h$. Then $f_g\sceq \tau_s(p)f_h$ is a $1$-eigenfunction of $\tau_s(g)$ that is holomorphic at $x_-(g)$.
\end{lemma}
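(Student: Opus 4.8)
The plan is to reduce everything to the \emph{homomorphism property} of $\tau_s$, namely that $\tau_s(uv)=\tau_s(u)\tau_s(v)$ wherever both sides are well-defined with compatible holomorphic extensions. This identity is just the chain rule for the automorphy factor $(g'(x))^s$ underlying the definition of $\tau_s$ in Section~\ref{sec:TO}: writing $\tau_s(k)f(x)=\big((k^{-1})'(x)\big)^s f(k^{-1}.x)$ and applying the chain rule to the derivative of the relevant composition of M\"obius transformations, the cocycle relation for the factors yields the operator identity after raising to the power $s$ with the branch fixed by our tacit choices. I would verify it only for the three elements $g$, $p$, $h$ actually occurring, so that the finitely many branch constraints can be met simultaneously.

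Granting this, the eigenfunction property is immediate. From $g=php^{-1}$ we get $gp=ph$, hence $\tau_s(g)\tau_s(p)=\tau_s(gp)=\tau_s(ph)=\tau_s(p)\tau_s(h)$. Applying both sides to $f_h$ and using $\tau_s(h)f_h=f_h$ gives
\[
\tau_s(g)f_g=\tau_s(g)\tau_s(p)f_h=\tau_s(p)\tau_s(h)f_h=\tau_s(p)f_h=f_g,
\]
so $f_g$ is a $1$-eigenfunction of $\tau_s(g)$.

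For the holomorphy claim I first record how the fixed points transform. Since $g^n=ph^np^{-1}$, since $p$ preserves $\h$, and since $p$ acts continuously on $\wh\C$, for any $z\in\h$ we have $x_+(g)=\lim_n g^n.z=p.\big(\lim_n h^n.(p^{-1}.z)\big)=p.x_+(h)$, and likewise $x_-(g)=p.x_-(h)$; thus $p$ carries the repelling fixed point of $h$ to that of $g$. Now $f_h$ is holomorphic at $x_-(h)$ by hypothesis, and $\tau_s(p)$ is holomorphy-preserving (Section~\ref{sec:TO}). More precisely, inspecting $\tau_s(p)f(x)=\big((p^{-1})'(x)\big)^s f(p^{-1}.x)$ shows that $\tau_s(p)$ sends a function holomorphic at a point $q$ to one holomorphic at $p.q$, because $p^{-1}$ is a biholomorphism of a neighborhood of $p.q$ onto a neighborhood of $q$ and the automorphy factor is holomorphic and nonvanishing there. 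Taking $q=x_-(h)$ yields that $f_g=\tau_s(p)f_h$ is holomorphic at $p.x_-(h)=x_-(g)$, as claimed.

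The only genuine subtleties are bookkeeping ones. First, one must ensure the chosen holomorphic extensions make $\tau_s(g)\tau_s(p)=\tau_s(p)\tau_s(h)$ hold exactly, rather than up to a branch-dependent constant; this is precisely where the ``finitely many restrictions, easily satisfiable'' remark of Section~\ref{sec:TO} is invoked. Second, if $x_-(h)$ or $x_-(g)$ equals $\infty$, holomorphy must be read in the sense of \eqref{def_holom}, and one uses that $\tau_s(p)$ preserves holomorphy at $\infty$ as well; the fixed-point computation above is unaffected, since it takes place in $\wh\C$. I expect this branch-and-infinity bookkeeping to be the main (though routine) obstacle, the algebraic core being an immediate consequence of the homomorphism property.
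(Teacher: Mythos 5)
Your proposal is correct and follows essentially the same route as the paper: the eigenfunction property via the computation $\tau_s(g)\tau_s(p)f_h=\tau_s(p)\tau_s(h)f_h=\tau_s(p)f_h$ resting on the (branch-compatible) homomorphism property of $\tau_s$, and the regularity claim via the holomorphy-preservation of $\tau_s(p)$ noted in Section~\ref{sec:TO}. The only difference is that you spell out the fixed-point identity $x_-(g)=p.x_-(h)$ and the local biholomorphism argument explicitly, where the paper simply cites Section~\ref{sec:TO}.
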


\begin{proof}
The statements on the holomorphy follow directly from Section~\ref{sec:TO}. The eigenfunction property is shown by a straightforward calculation, which we provide for the convenience of the reader. We have
\begin{align*}
\ACT{g}f_g & = \ACT{php^{-1}}\ACT{p}f_h = \ACT{p}\ACT{h}f_h = \ACT{p}f_h = f_g.
\end{align*}
This completes the proof.
\end{proof}

The following lemma is crucial for the consideration of arbitrary Schottky surfaces. However, for hyperbolic cylinder it facilitates the further argumentation. Throughout let 
\begin{equation}\label{def_S}
S\sceq \bmat{0}{1}{-1}{0}.
\end{equation}

\begin{lemma}\label{lem:inverse}
Let $h\in \PSL_2(\R)$ be hyperbolic. Then there exists $k\in \PSL_2(\R)$ such that $khk^{-1} = h^{-1}$.
\end{lemma}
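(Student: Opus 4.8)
The plan is to reduce the statement to the prototypical hyperbolic element $a_L$ and to exhibit a single universal element that conjugates $a_L$ to its inverse. The geometric idea is that $a_L$ has attracting fixed point $\infty$ and repelling fixed point $0$, whereas its inverse $a_L^{-1}$ has these two roles interchanged; hence any isometry swapping $0$ and $\infty$ ought to conjugate $a_L$ into $a_L^{-1}$. The element $S$ from \eqref{def_S}, which realizes the map $z\mapsto -1/z$ and thus interchanges $0$ and $\infty$, is the natural candidate.

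Concretely, I would proceed in three steps. First, invoke the normal form recalled in Section~\ref{sec:hyperbolic}: since $h$ is hyperbolic, there exist $p\in\PSL_2(\R)$ and $L>0$ with $h=pa_Lp^{-1}$. Second, verify by a direct matrix multiplication that
\[
 Sa_LS^{-1} = \bmat{e^{-L/2}}{0}{0}{e^{L/2}} = a_L^{-1}.
\]
Third, set $k\sceq pSp^{-1}\in\PSL_2(\R)$ and compute
\[
 khk^{-1} = pSp^{-1}\,pa_Lp^{-1}\,pS^{-1}p^{-1} = p\,\big(Sa_LS^{-1}\big)\,p^{-1} = pa_L^{-1}p^{-1} = h^{-1},
\]
which is exactly the asserted identity.

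There is essentially no obstacle here: the entire content is the one-line identity $Sa_LS^{-1}=a_L^{-1}$, and everything else is bookkeeping with the conjugation $h=pa_Lp^{-1}$. The only point worth a remark is that the computation takes place in $\PSL_2(\R)$, so the sign ambiguity in the matrix representatives (in particular $S^2=-\id$, i.e.\ $S^{-1}=S$ in $\PSL_2(\R)$) is harmless and does not affect the argument.
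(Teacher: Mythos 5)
Your proof is correct and follows essentially the same route as the paper: both reduce to the diagonal normal form $a_L$ via conjugation and use the identity $Sa_LS^{-1}=a_L^{-1}$ (the paper writes $Sa_LS$, which is the same since $S^{-1}=S$ in $\PSL_2(\R)$, a point you correctly note). The only difference is the cosmetic one of writing $h=pa_Lp^{-1}$ rather than $php^{-1}=a_L$, so your $k=pSp^{-1}$ is the paper's $p^{-1}Sp$ with $p$ renamed.
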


\begin{proof}
Since $h$ is hyperbolic, $h$ is conjugate within $\PSL_2(\R)$ to a diagonal element. Thus, there is $L\in\R$ and $p\in G$ such that $php^{-1}=a_L$. From 
\[
 Sa_LS = a_{-L} = a_L^{-1}
\]
it follows that 
\[
 p^{-1}Sphp^{-1}Sp = p^{-1}Sa_LSp = p^{-1}a_L^{-1}p = h^{-1}.
\]
This completes the proof.
\end{proof}

The combination of Proposition~\ref{babycase:eigenfunction} and Lemma~\ref{lem:conjugation} allows us to fully determine the $1$-eigenspaces of the transfer operators of hyperbolic cyclinders. This provides a proof of Theorem~\ref{thm:ef_cyl}, see Theorem~\ref{thm:cyl_full} below. Lemma~\ref{lem:inverse} or rather its proof simplifies the argumentation in the proof of Theorem~\ref{thm:cyl_full}.

\begin{thm}\label{thm:cyl_full}
Let $C_\ell$ be a hyperbolic cylinder and let 
\[
 \left(\Gamma, 2, \{\mc D_1,\mc D_{-1}\}, \{S_1,S_{-1}\}\right)
\]
be a choice of Schottky data for $C_\ell$. Let $p\in\PSL_2(\R)$ be such that 
\[
 S_1 = p a_\ell p^{-1}.
\]
For $s\in\C$ let $\TO_s$ be the associated transfer operator and let $\mc H$ be the Hilbert space as in Section~\ref{sec:TO}. Then $\TO_s$ has a $1$-eigenfunction in $\mc H$ if and only if $s\in -\N_0+\frac{2\pi i}{\ell}\Z$. For $s=-n+\frac{2\pi i}{\ell}\Z$, $n\in\N_0$, the $1$-eigenspace of $\TO_s$ is spanned by the two functions
\[
 \left(\tau_s(p)z^n, 0 \right) \quad\text{and}\quad \left(0,\tau_s(pS)z^n \right)
\]
with $S$ from \eqref{def_S}.
\end{thm}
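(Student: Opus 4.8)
The plan is to exploit the fact that, for a hyperbolic cylinder, the number of generators is $r=1$, so that the transfer operator degenerates to a direct sum and the eigenvalue problem decouples. First I would specialize the matrix presentation of Section~\ref{sec:TO} to $r=1$: the only entries are in positions $(1,1),(1,-1),(-1,1),(-1,-1)$, and the forced zeros sit at $(1,-1)$ and $(-1,1)$, so that $\TO_s = \tau_s(S_1)\oplus\tau_s(S_{-1})$ on $\mc H = H^2(\mc D_1)\oplus H^2(\mc D_{-1})$. Consequently $(f_1,f_{-1})$ is a $1$-eigenfunction of $\TO_s$ if and only if $\tau_s(S_1)f_1=f_1$ with $f_1\in H^2(\mc D_1)$ and $\tau_s(S_{-1})f_{-1}=f_{-1}$ with $f_{-1}\in H^2(\mc D_{-1})$, the two components being entirely independent; this independence already accounts for the factor $2$ in the multiplicity.

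Next I would locate the fixed points relative to the disks. By the Schottky dynamics, the repelling fixed point $x_-(S_1)=p.0$ lies in $\mc D_1$, while the attracting fixed point $x_+(S_1)=p.\infty$, which coincides with $x_-(S_{-1})$, lies in $\mc D_{-1}$. Hence every $f_1\in H^2(\mc D_1)$ is in particular holomorphic at $x_-(S_1)$, and every $f_{-1}\in H^2(\mc D_{-1})$ is holomorphic at $x_-(S_{-1})$. For the first component, Lemma~\ref{lem:conjugation} applied to $S_1=pa_\ell p^{-1}$ (in both directions, with conjugators $p$ and $p^{-1}$, to obtain a bijection) identifies the $1$-eigenfunctions of $\tau_s(S_1)$ holomorphic at $x_-(S_1)$ with the images under $\tau_s(p)$ of the $1$-eigenfunctions of $\ACT{a_\ell}$ holomorphic at $0$; by Proposition~\ref{babycase:eigenfunction} these exist precisely when $s\in -n+\frac{2\pi i}{\ell}\Z$ and are then spanned by $z^n$, yielding $\tau_s(p)z^n$. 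For the second component I would first rewrite $S_{-1}=S_1^{-1}$ as a conjugate of $a_\ell$ by a \emph{positive} length: using the identity $Sa_\ell S=a_\ell^{-1}$ from the proof of Lemma~\ref{lem:inverse}, one gets $S_{-1}=p a_\ell^{-1}p^{-1}=(pS)a_\ell(pS)^{-1}$. Since $(pS).0=p.(S.0)=p.\infty=x_-(S_{-1})$, Lemma~\ref{lem:conjugation} with conjugator $pS$ together with Proposition~\ref{babycase:eigenfunction} produces exactly $\tau_s(pS)z^n$ for the same parameters $s$.

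Combining the two decoupled classifications, $\TO_s$ admits a $1$-eigenfunction if and only if $s\in -\N_0+\frac{2\pi i}{\ell}\Z$, and for $s\in -n+\frac{2\pi i}{\ell}\Z$ the $1$-eigenspace is spanned by $\left(\tau_s(p)z^n,0\right)$ and $\left(0,\tau_s(pS)z^n\right)$. These are linearly independent because they lie in complementary summands of $\mc H$, so the geometric multiplicity is $2$.

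The step I expect to be the main obstacle is matching the pointwise-holomorphy classification of Proposition~\ref{babycase:eigenfunction} with genuine membership in the Bergman spaces $H^2(\mc D_{\pm 1})$. One inclusion is immediate, since an $H^2(\mc D_1)$-function is holomorphic throughout $\mc D_1$ and hence at $x_-(S_1)$; for the converse I must verify that the candidate eigenfunctions $\tau_s(p)z^n$ and $\tau_s(pS)z^n$ are holomorphic and square-integrable on the entire disk, not just at the repelling fixed point. This reduces to noting that $z^n$ is entire and that the pole of the relevant Möbius factor (namely $x_+(S_1)\in\mc D_{-1}$ for $\tau_s(p)$, respectively $x_-(S_1)\in\mc D_1$ for $\tau_s(pS)$) lies outside the closure of the disk in question, since $\overline{\mc D_1}$ and $\overline{\mc D_{-1}}$ are disjoint; hence both functions extend holomorphically and boundedly across the closed disk. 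The choice of holomorphic branch for the factor $(\cdot)^s$ and the treatment of disks containing $\infty$ are exactly the tacit conventions fixed in Section~\ref{sec:TO}.
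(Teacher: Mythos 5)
Your proposal is correct and follows essentially the same route as the paper's own proof: specializing the matrix presentation to $r=1$ so that $\TO_s$ becomes the direct sum $\tau_s(S_1)\oplus\tau_s(S_1^{-1})$, and then applying Lemma~\ref{lem:conjugation} (with conjugators $p$ and $pS$) together with Proposition~\ref{babycase:eigenfunction} to classify each component. You additionally spell out details the paper leaves tacit (the location of the fixed points in $\mc D_{\pm 1}$, the bijectivity of the conjugation step, and the verification that the candidate eigenfunctions actually lie in the Bergman spaces), all of which are correct.
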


\begin{proof}
The transfer operator $\TO_s$ for $s\in\C$ is
\[
 \TO_s = 
 \begin{pmatrix}
  \tau_s(S_1) & 0
  \\
  0 & \tau_s(S_1^{-1})
 \end{pmatrix}.
\]
Thus, the $1$-eigenspace of $\TO_s$ is the direct sum of the $1$-eigenspaces of $\tau_s(S_1)$ and $\tau_s(S_1^{-1})$. One easily checks that the element $p\in\PSL_2(\R)$ with the properties as claimed exist. Application of Lemma~\ref{lem:conjugation} and Proposition~\ref{babycase:eigenfunction} (in this order) completes the proof.
\end{proof}

\subsection{Eigenfunctions for transfer operators of arbitrary Schottky groups}

The following proposition is the key result for constructing explicit $1$-eigenfunctions of the transfer operators for arbitrary Schottky surfaces. 

\begin{prop}\label{prop:vanishing}
Let $s\in\C$ and let $h\in \PSL_2(\R)$ be hyperbolic. Suppose that $f$ is a $1$-eigenfunction of $\ACT{h}$ that is holomorphic at $x_-(h)$.
Choose $k\in \PSL_2(\R)$ such that $khk^{-1}=h^{-1}$ (see Lemma~\ref{lem:inverse}).  Let $U\subseteq \wh\C\smallsetminus\{x_\pm(h)\}$ be an open set. Then $\ACT{h}f$ and $\ACT{h^{-1}k}f$ extend holomorphically to $U$. Further
\[
 \left( \ACT{h}f - (-1)^{\Rea s} \ACT{h^{-1}} \ACT{k}f\right)\vert_U \equiv 0\quad\Leftrightarrow\quad \Ima s = 0.
\]
\end{prop}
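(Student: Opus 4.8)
The plan is to rewrite the two terms in a way that exhibits their holomorphy and their eigenfunction nature, then to reduce the equivalence to the prototypical element $a_L$ and finish by an explicit computation. Throughout write $x_\pm=x_\pm(h)$.

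First I would settle the holomorphy claims and simplify the second term. Since $f$ is a $1$-eigenfunction, $\tau_s(h)f=f$; iterating to $\tau_s(h^n)f=f$ and using that $h^{-n}.x\to x_-$ for every $x\ne x_+$, the identity $f=\tau_s(h^n)f$ expresses the germ of $f$ at any such $x$ through the germ of $f$ at $x_-$ once $n$ is large, which continues $f=\tau_s(h)f$ holomorphically to $\wh\C\smallsetminus\{x_+\}\supseteq U$. For the second term, $khk^{-1}=h^{-1}$ gives $h^{-1}k=kh$, so
\[
 \tau_s(h^{-1})\tau_s(k)f=\tau_s(h^{-1}k)f=\tau_s(kh)f=\tau_s(k)\tau_s(h)f=\tau_s(k)f=:g .
\]
The same manipulation yields $\tau_s(h^{-1})g=\tau_s(k)f=g$, so $g$ is a $1$-eigenfunction of $\tau_s(h^{-1})$; as $\tau_s(k)$ is holomorphy-preserving and $k.x_-=x_+=x_-(h^{-1})$, the function $g$ is holomorphic at $x_-(h^{-1})$, and the propagation argument applied to $h^{-1}$ continues $g$ holomorphically to $\wh\C\smallsetminus\{x_-\}\supseteq U$. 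This proves the first assertion and shows that the two functions to be compared on $U$ are (the continuations of) $f$ and $g=\tau_s(k)f$.

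Next I would reduce the equivalence to $h=a_L$. By Lemma~\ref{lem:conjugation} I may conjugate $h$ to $a_L$ and replace $f$ by the corresponding $1$-eigenfunction of $\tau_s(a_L)$; by Proposition~\ref{babycase:eigenfunction} this forces $s\in-n+\tfrac{2\pi i}{L}\Z$ for some $n\in\N_0$ (so $\Rea s=-n$) and $f=z^n$ up to a scalar. Under this conjugation $k$ becomes some $k_0$ with $k_0a_Lk_0^{-1}=a_L^{-1}$, i.e.\ $k_0\in S\cdot\{a_t:t\in\R\}$ by the proof of Lemma~\ref{lem:inverse}; writing $k_0=Sa_t$ and using $\tau_s(a_t)z^n=e^{-t(s+n)}z^n=e^{-2\pi i mt/L}z^n$ shows that moving $k_0$ within this coset only multiplies $g$ by a unimodular constant, which cannot affect whether the combination vanishes. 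Since $\tau_s(p)$ is injective and the constant $(-1)^{\Rea s}$ factors through it, it suffices to treat $h=a_L$, $f=z^n$, $k=S$. Here $\tau_s(h)f=f=z^n$, while with $S$ from \eqref{def_S} and the normalisation of $\tau_s$ fixed in Section~\ref{sec:TO},
\[
 \tau_s(S)z^n(x)=|x|^{-2s}\Bigl(\tfrac{-1}{x}\Bigr)^{\!n}=(-1)^n|x|^{-2s}x^{-n}\qquad(x\in\R^\times).
\]
Continuing this from the positive real axis into $U$ gives $g(z)=(-1)^n z^{-2s-n}$, and $\Rea s=-n$ lets me rewrite $-2s-n=-2\Rea s-n-2i\Ima s=n-2i\Ima s$, so that $(-1)^{\Rea s}g=(-1)^{-n}(-1)^nz^{\,n-2i\Ima s}=z^{\,n-2i\Ima s}$. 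Hence
\[
 \bigl(\tau_s(h)f-(-1)^{\Rea s}\tau_s(h^{-1})\tau_s(k)f\bigr)\big|_U=z^n-z^{\,n-2i\Ima s}=z^n\bigl(1-z^{-2i\Ima s}\bigr),
\]
which vanishes on the open set $U$ if and only if $z^{-2i\Ima s}\equiv1$, i.e.\ if and only if $\Ima s=0$.

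The hard part is the middle computation, namely the holomorphic continuation of $\tau_s(S)f$: the automorphy factor $|x|^{-2s}$ produced by the real transfer operator is not the restriction of a single holomorphic function across $x=0$, and it is precisely its continuation into $U$ that turns $z^n$ into $z^{\,n-2i\Ima s}$. The prefactor $(-1)^{\Rea s}=(-1)^n$ is exactly what is needed to absorb the combinatorial sign $(-1)^n$ arising from $(-1/z)^n$, leaving the monodromy factor $z^{-2i\Ima s}$ as the only surviving discrepancy, which is trivial precisely when $s$ is real. The point demanding care is that the branch used for the continuation is the one dictated by the holomorphy-at-$\infty$ convention fixed in Section~\ref{sec:TO}, and, as noted above, that the residual freedom in the choice of $k$ does not interfere with the conclusion.
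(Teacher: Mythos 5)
Your proof is correct and follows essentially the same route as the paper's: reduce to the prototype $h=a_L$, $k=S$ via Lemma~\ref{lem:conjugation}, invoke Proposition~\ref{babycase:eigenfunction} to force $f=z^n$ and $s\in -n+\tfrac{2\pi i}{L}\Z$, and then compute the combination explicitly as $z^n\bigl(1-z^{-2i\Ima s}\bigr)$, which vanishes identically on an open set if and only if $\Ima s=0$. Your two additions --- the iteration argument propagating holomorphy to $\wh\C\smallsetminus\{x_+(h)\}$ and the check that replacing $S$ by any other admissible $k$ in the coset $S\cdot\{a_t : t\in\R\}$ only introduces a unimodular constant that cannot affect the conclusion --- merely make explicit what the paper subsumes under its ``without loss of generality'' reduction.
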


\begin{proof}
By Lemma~\ref{lem:conjugation} we may restrict without loss of generality to $h=a_L$ for $L>0$ and $k=S=\textbmat{0}{1}{-1}{0}$. Then $U$ is any (non-empty) open subset of $\C\smallsetminus\{0\}$. From Proposition~\ref{babycase:eigenfunction} it follows that $s\in -n+\frac {2\pi i \ell}L$ for some $n\in\N_0$, $\ell\in\Z$, and the eigenfunction $f$  is (up to scaling) $z^n$. Thus, $f$ extends holomorphically to all of $\C\smallsetminus\{0\}$, and hence $\ACT{S}f$ does so. For every $z\in U$ it follows that
\begin{align*}
\left(\ACT{a_L}f-(-1)^{\Rea s}\ACT{a_{-L}}\tau_s(S)f\right)_{|U}(z)& =z^n-z^{n-\frac {4\pi i \ell}L}
= z^n\left( 1 - z^{-\frac{4\pi i \ell}L}\right).
\end{align*}
This difference vanishes for all $z\in U$ if $\ell=0$, and at most for finitely many $z\in U$ if $\ell\ne 0$.  
\end{proof}

In Theorem~\ref{thm:arb_full} below we take advantage of Proposition~\ref{prop:vanishing} for investigating if and how the $1$-eigenfunctions of the transfer operators for hyperbolic cylinders can be inherited, or rather extended, to $1$-eigenfunctions of the transfer operators of arbitrary Schottky surfaces. Theorem~\ref{thm:arb_full} is the announced more refined version of Theorem~\ref{thm:ef_arb}.

\begin{thm}\label{thm:arb_full}
Let $X$ be a Schottky surface and let 
\[
 \left(\Gamma, r, \big\{\mc D_{\pm j}\big\}_{j=1}^r, \big\{ S_{\pm j}\big\}_{j=1}^r \right)
\]
be a choice of Schottky data for $X$. For $s\in\C$ let $\TO_s$ be the associated transfer operator and $\mc H$ be the associated Hilbert space as defined in Section~\ref{sec:TO}. Let $s\in -\N_0$.
\begin{enumerate}
\item Let  $j_0\in\{1,\ldots, r\}$ and let $f_{j_0}$ be a $1$-eigenfunction of $\ACT{S_{j_0}}$ that is holomorphic at $x_-(S_{j_0})$. Fix $k_{j_0}\in \PSL_2(\R)$ such that 
\[
 k_{j_0}S_{j_0}k_{j_0}^{-1} = S_{j_0}^{-1}.
\]
Then $f = (f_1,\ldots, f_r, f_{-1},\ldots, f_{-r})$ with 
\[
 f_{\pm j} \sceq 0 \quad\text{for $j\in\{1,\ldots, r\}\smallsetminus\{j_0\}$}
\]
and
\[
 f_{-j_0} \sceq - (-1)^s\ACT{k_{j_0}} f_{j_0}
\]
defines a $1$-eigenfunction of $\TO_s$ in $\mc H$. 
\item The geometric multiplicity of the eigenvalue $1$ of $\TO_s$ in $\mc H$ is at least $1-\chi(X)$.
\end{enumerate}
\end{thm}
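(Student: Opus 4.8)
The plan is to prove the two parts of Theorem~\ref{thm:arb_full} by first verifying directly that the vector $f$ described in part~(i) lies in the kernel of $\TO_s - 1$, and then counting how many linearly independent such vectors the construction produces as $j_0$ ranges over $\{1,\ldots,r\}$.

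For part~(i), I would read off from the matrix presentation of $\TO_s$ in Section~\ref{sec:TO} what $\TO_s f$ is, component by component, for the specific vector $f$ in which only the $j_0$ and $-j_0$ slots are nonzero. Because $f_{\pm j}=0$ for all $j\neq j_0$, every entry of $\TO_s f$ reduces to a sum of just two terms, namely $\ACT{S_{j_0}}f_{j_0} + \ACT{S_{-j_0}}f_{-j_0}$, except in those two rows where the diagonal placement of the zero block suppresses one of these terms. First I would check the "generic" rows (those with index $\neq \pm j_0$): here the component of $\TO_s f$ equals $\ACT{S_{j_0}}f_{j_0} + \ACT{S_{-j_0}}f_{-j_0}$. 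Substituting $f_{-j_0} = -(-1)^s \ACT{k_{j_0}}f_{j_0}$ and $S_{-j_0}=S_{j_0}^{-1}$, this is exactly the expression $\ACT{h}f - (-1)^{\Rea s}\ACT{h^{-1}}\ACT{k}f$ from Proposition~\ref{prop:vanishing} with $h=S_{j_0}$, $k=k_{j_0}$; since $s\in-\N_0$ we have $\Ima s=0$, so Proposition~\ref{prop:vanishing} forces this to vanish identically on the relevant disk (which is disjoint from $\{x_\pm(S_{j_0})\}$). Next I would handle the two special rows: in the $j_0$-th row the $\ACT{S_{j_0}}$ entry is replaced by $0$, so the component is $\ACT{S_{-j_0}}f_{-j_0}$, which by construction equals $-(-1)^s\ACT{S_{j_0}^{-1}}\ACT{k_{j_0}}f_{j_0}=\ACT{S_{j_0}^{-1}}f_{-j_0}$ and must be shown to recover $f_{j_0}$; symmetrically for the $-j_0$-th row using that $f_{j_0}$ is a $1$-eigenfunction of $\ACT{S_{j_0}}$. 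This last matching is where the eigenfunction hypothesis on $f_{j_0}$ and the conjugation relation $k_{j_0}S_{j_0}k_{j_0}^{-1}=S_{j_0}^{-1}$ (together with the $(-1)^s$ sign, which is $\pm1$ since $s$ is an integer) are consumed, and it is the step I expect to require the most care with signs and with the bookkeeping of which matrix entry sits where.

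For part~(ii), I would observe that part~(i) yields one $1$-eigenfunction for each choice of $j_0\in\{1,\ldots,r\}$ (taking for $f_{j_0}$ the nonzero eigenfunction supplied by Proposition~\ref{babycase:eigenfunction} and Lemma~\ref{lem:conjugation}), and these $r$ eigenfunctions are linearly independent because the one built from index $j_0$ is supported on the $\pm j_0$ slots and in particular has nonzero $j_0$-component while all the others have zero $j_0$-component. This gives geometric multiplicity at least $r$. To reach the stated bound $1-\chi(X)$, I would invoke the standard identity for Schottky surfaces relating the number of generators to the Euler characteristic: a Schottky group on $r$ generators is free of rank $r$, so $\chi(X)=1-r$ and hence $1-\chi(X)=r$. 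Thus the count of $r$ independent eigenfunctions is exactly the asserted lower bound.

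The main obstacle is the careful verification in part~(i) that the two special rows reproduce $f_{j_0}$ and $f_{-j_0}$. The generic rows follow cleanly from Proposition~\ref{prop:vanishing}, but the diagonal zeros break the symmetry, so one must track precisely how the suppressed term interacts with the surviving one and confirm that the eigenfunction relation $\ACT{S_{j_0}}f_{j_0}=f_{j_0}$ closes the computation; the sign $(-1)^s$ must also be checked to be consistent in both special rows simultaneously. Everything else is bookkeeping: reading entries off the displayed matrix, and the purely algebraic passage from $r$ to $1-\chi(X)$.
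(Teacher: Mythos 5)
Your overall strategy is the same as the paper's: verify $\TO_s f=f$ component by component, using Proposition~\ref{prop:vanishing} for the components with index in $\{\pm 1,\ldots,\pm r\}\smallsetminus\{\pm j_0\}$, and then obtain part (ii) by letting $j_0$ vary, with linear independence from the disjoint supports and the identity $1-\chi(X)=r$. The generic rows and part (ii) are handled correctly and agree with the paper. However, your treatment of the two special rows contains a genuine error: you have the zero entries of the matrix in the wrong places. In the presentation of Section~\ref{sec:TO}, the zero in row $j$ sits in column $-j$, not in column $j$; that is, the $(j,k)$ entry is $\tau_s(S_k)$ exactly for $k\neq -j$. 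This placement is forced by domains: $\tau_s(S_k)f_k$ is defined on $S_k.\mc D_k$, the exterior of $\overline{\mc D_{-k}}$, so it restricts to every disk except $\mc D_{-k}$. Your reading would put the operator $\tau_s(S_{-j_0})$ into row $j_0$, where its output is not even defined on $\mc D_{j_0}$.

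With the correct reading, the two special rows are immediate and require no further work: $(\TO_s f)_{j_0}=\tau_s(S_{j_0})f_{j_0}=f_{j_0}$ directly by the eigenfunction hypothesis, and $(\TO_s f)_{-j_0}=\tau_s\big(S_{j_0}^{-1}\big)f_{-j_0}=f_{-j_0}$ by Lemma~\ref{lem:conjugation}, since $f_{-j_0}$ is a scalar multiple of $\tau_s(k_{j_0})f_{j_0}$ and hence a $1$-eigenfunction of $\tau_s\big(S_{j_0}^{-1}\big)$. Under your reading, by contrast, the step you defer (``show that $\tau_s(S_{-j_0})f_{-j_0}$ recovers $f_{j_0}$'') is not provable, because it is false: Lemma~\ref{lem:conjugation} gives $\tau_s(S_{-j_0})f_{-j_0}=f_{-j_0}$, which differs from $f_{j_0}$ (already in the model case $h=a_L$, $k=S$, $s=-n$ one computes $f_{-j_0}=-f_{j_0}$). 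So your version of $\TO_s$ would swap the two distinguished components rather than fix them, and $f$ would fail to be a $1$-eigenfunction. The repair is exactly the bookkeeping correction above, after which your argument coincides with the paper's proof. One further small omission: to conclude that $f$ is an eigenfunction \emph{in $\mc H$} one should also record, as the paper does via Proposition~\ref{babycase:eigenfunction}, that $f_{j_0}$ extends holomorphically to $\wh\C\smallsetminus\{x_+(S_{j_0})\}$, so that $f_{j_0}$ and $f_{-j_0}$ genuinely define elements of the Bergman spaces $H^2(\mc D_{j_0})$ and $H^2(\mc D_{-j_0})$.
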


\begin{proof}
Without loss of generality, we may assume that $j_0=1$. Let 
\[
 \wt f \sceq \TO_s f,\qquad
\wt f = (\wt f_1,\ldots, \wt f_r, \wt f_{-1},\ldots, \wt f_{-r}).
\]
Then 
\begin{align*}
 \wt f_1 & = \ACT{S_1}f,
 \\
 \wt f_{-1} & = - (-1)^s\ACT{S_1^{-1}}\ACT{k_1}f_1, 
 \\
 \wt f_j & = \ACT{S_1}f_1 - (-1)^s\ACT{S_1^{-1}k_1}f_1 \quad\text{for $j\in\{\pm2,\ldots, \pm r\}$.}
\end{align*}
By hypothesis, $\wt f_1 = f_1$. By Lemma~\ref{lem:conjugation}, $\wt f_{-1}=f_{-1}$, and by Proposition~\ref{prop:vanishing}, $\wt f_j=0$ for all $j\in\{\pm2,\ldots, \pm r\}$. Thus, $f$ is a $1$-eigenfunction of $\TO_s$. 

By Proposition~\ref{babycase:eigenfunction}, $f_1$ exists, is unique up to scaling, and extends holomorphically to $\wh\C\smallsetminus\{x_+(S_1)\}$. Thus, $f$ extends to an element of $\mc H$. 

Moreover, by letting $j_0$ run through $\{1,\ldots, r\}$ this construction yields $r=1-\chi(X)$ linear independent $1$-eigenfunctions of $\TO_s$. Hence the geometric multiplicity of $1$ as an eigenvalue of $\TO_s$ in $\mc H$ is at least $1-\chi(X)$.
\end{proof}

\subsection{Proofs of Theorem~\ref{thm:zero}-\ref{thm:specialzeros}}\label{sec:proof_resonance}

As already explained in the Introduction, Theorem~\ref{thm:zero} and \ref{thm:specialzeros} now follow from Theorem~\ref{thm:arb_full} and \ref{thm:cyl_full} in comparison with the knowledge on location and orders of topological zeros as obtained from \eqref{SZF_factorization}.

\bibliography{ap_bib}
\bibliographystyle{amsplain}
\end{document}